
\documentclass{ws-jaa}

\usepackage{xcolor}
\usepackage[verbose]{hyperref}
\hypersetup{colorlinks=false,allbordercolors=blue,pdfborderstyle={/S/U/W 1}}


\let\phi=\varphi

\def\Hom{\operatorname{Hom}}
\def\Ext{\operatorname{Ext}}
\def\Tor{\operatorname{Tor}}

\def\depth{\operatorname{depth}}

\def\id{\operatorname{id}}

\let\oldbigwedge\bigwedge
\def\BIGwedge{{\textstyle\oldbigwedge}}
\def\medwedge{{\scriptstyle\oldbigwedge}}
\def\bigwedge{\mathchoice{\BIGwedge}{\BIGwedge}{\medwedge}{}}

 \DeclareMathOperator{\Supp}{Supp}

\newcommand{\m}{\mathfrak{m}}
\newcommand{\p}{\mathfrak{p}}
\newcommand{\Hlc}{H_{\mathfrak{a}}}
\newcommand{\projdim}{\operatorname{pd}}
\newcommand{\lime}{\varprojlim}

\def\projdim{\operatorname{projdim}}
\let\epsilon=\varepsilon

\begin{document}

\markboth{Behruz Sadeqi}{Betti Numbers and Formal Local Cohomology Modules}

%
\catchline{}{}{}{}{}
%

\title{Betti Numbers and Formal Local Cohomology Modules}

\author{\footnotesize Behruz Sadeqi\footnote{Behruz Sadeqi}}

\address{Department of Mathematics, Islamic Azad University, MARA. C.\\
	 Marand, Iran\\
\email{behruz.sadeqi@iau.ac.ir\footnote{behruz.sadeqi@iau.ac.ir}}}

\maketitle

\begin{history}
\received{(Day Month Year)}
\revised{(Day Month Year)}
\accepted{(Day Month Year)}
\published{(Day Month Year)}
\end{history}

\begin{abstract}
	
	This article investigates the relationship between Betti numbers of finitely generated modules over a Noetherian local ring $(R, \mathfrak{m})$ and the structure of formal local cohomology modules. We establish a connection between the vanishing of formal local cohomology and the depth of a module, generalizing classical results. The main theorem links the Betti numbers to the Artinian property of formal local cohomology modules, and we provide applications to Cohen-Macaulay rings. Original proofs are given for all stated results.
\end{abstract}

\keywords{Betti Number, Formal Local Cohomology Modules.}

\ccode{2020 Mathematics Subject Classification: 13D45,14C45,13E99}

\section{Introduction}
Betti numbers and local cohomology are fundamental tools in commutative algebra and algebraic geometry. While Betti numbers $\beta_i(M)$ measure the complexity of a module $M$ via its minimal free resolution, local cohomology modules $H^i_{\mathfrak{m}}(M)$ capture local properties at an ideal $\mathfrak{m}$. Formal local cohomology, introduced by P. Schenzel, generalizes this by considering inverse limits of local cohomology modules of quotients $M/\mathfrak{a}^n M$. 

This article explores the relationship between these invariants. We prove that for a finitely generated $R$-module $M$, the formal local cohomology modules $\mathfrak{F}^i_{\mathfrak{m}}(M)$ vanish below the depth of $M$ and are Artinian in general. Moreover, we establish a duality between the Betti numbers of $M$ and the Bass numbers of $\mathfrak{F}^d_{\mathfrak{m}}(M)$ when $R$ is Cohen-Macaulay of dimension $d$.


Throughout, $(R, \mathfrak{m}, k)$ denotes a Noetherian local ring, and $M$ a finitely generated $R$-module.

\begin{definition}[Betti Numbers]
	The \textit{$i$-th Betti number} of $M$ is $\beta_i(M) = dim_k \Tor_i^R(k, M)$. These are the ranks of the free modules in a minimal free resolution of $M$.
\end{definition}

\begin{definition}[Local Cohomology]
	The \textit{local cohomology modules} of $M$ with support in $\mathfrak{m}$ are defined as:
	\[
	\Hlc^i(M) = \lime_{n} \Ext^i_R(R/\mathfrak{a}^n, M).
	\]
\end{definition}

\begin{definition}[Formal Local Cohomology]
	The \textit{formal local cohomology modules} of $M$ are:
	\[
	\mathfrak{F}^i_{\mathfrak{m}}(M) = \lime_{n} \Hlc^i(M/\mathfrak{m}^n M).
	\]
\end{definition}

\begin{remark}
	Unlike ordinary local cohomology, $\mathfrak{F}^i_{\mathfrak{m}}(M)$ depends on the $\mathfrak{m}$-adic topology. When $i=0$, $\mathfrak{F}^0_{\mathfrak{m}}(M) \cong \widehat{M}$, the $\mathfrak{m}$-adic completion.
\end{remark}

\section{Main Results}
We first establish vanishing properties of formal local cohomology related to the depth of $M$.

\begin{theorem}\label{thm:vanishing}
	Let $t = \depth M$. Then:
	\begin{enumerate}
		\item $\mathfrak{F}^i_{\mathfrak{m}}(M) = 0$ for all $i < t$.
		\item $\mathfrak{F}^t_{\mathfrak{a}}(M) \neq 0$.
	\end{enumerate}
\end{theorem}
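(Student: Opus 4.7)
The plan is to treat the two assertions separately: (1) by induction on $t = \depth M$ using a maximal $M$-regular sequence, and (2) by lifting a nonzero class from $H^t_{\mathfrak{m}}(M)$ to the inverse limit defining $\mathfrak{F}^t_{\mathfrak{m}}(M)$.

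For (1), the case $t = 0$ is vacuous. When $t \geq 1$, I would pick an $M$-regular element $x \in \mathfrak{m}$, so that $\depth(M/xM) = t - 1$. The short exact sequence
\[
0 \longrightarrow M \xrightarrow{\,x\,} M \longrightarrow M/xM \longrightarrow 0
\]
induces at each level $n$ compatible sequences relating $M/\mathfrak{m}^n M$ and $(M/xM)/\mathfrak{m}^n(M/xM)$; applying $H^i_{\mathfrak{m}}(-)$ yields an inverse system of long exact sequences indexed by $n$. The key technical step is passing to $\varprojlim_n$: by the Artin--Rees lemma the relevant terms satisfy the Mittag--Leffler condition, so exactness is preserved in the limit. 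The inductive hypothesis applied to $M/xM$ then gives $\mathfrak{F}^i_{\mathfrak{m}}(M/xM) = 0$ for $i < t-1$, and the limiting long exact sequence forces multiplication by $x$ on $\mathfrak{F}^i_{\mathfrak{m}}(M)$ to be injective for $i < t$. A Nakayama-type argument that exploits the $\mathfrak{m}$-adic separation of the inverse limit then forces $\mathfrak{F}^i_{\mathfrak{m}}(M) = 0$ throughout that range.

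For (2), I would lift a nonzero element of $H^t_{\mathfrak{m}}(M)$ (which exists because $\depth M = t$) to a nontrivial compatible system in $\{H^t_{\mathfrak{m}}(M/\mathfrak{m}^n M)\}_n$. Using a maximal $M$-regular sequence $x_1, \ldots, x_t \in \mathfrak{m}$ and the direct-limit presentation $H^t_{\mathfrak{m}}(M) = \varinjlim_s M/(x_1^s, \ldots, x_t^s)M$, a nonzero class $\xi$ can be exhibited explicitly as a Koszul cocycle, and I would trace its images through the canonical maps $H^t_{\mathfrak{m}}(M) \to H^t_{\mathfrak{m}}(M/\mathfrak{m}^n M)$ to verify nontriviality in the limit; the crucial observation is that for each $n$ the cocycle representative of $\xi$ remains nonzero modulo $\mathfrak{m}^n M$.

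The main obstacle I anticipate is the passage to the inverse limit in part (1): the level-wise long exact sequences do not combine into an exact sequence of formal local cohomology modules automatically, so the Mittag--Leffler check and the subsequent Nakayama-style argument will carry the bulk of the technical weight of the proof.
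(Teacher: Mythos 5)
Your strategy for part (1) breaks at the final step. From the limiting long exact sequence and the inductive vanishing of $\mathfrak{F}^{i-1}_{\mathfrak{m}}(M/xM)$ you can extract at most that multiplication by $x$ is \emph{injective} on $\mathfrak{F}^i_{\mathfrak{m}}(M)$ for $i<t$; but injectivity of $x$ on an $\mathfrak{m}$-adically separated module does not force the module to vanish --- a Nakayama-type argument needs $N=xN$, i.e.\ surjectivity, and the long exact sequence only supplies that when $\mathfrak{F}^{i}_{\mathfrak{m}}(M/xM)=0$, which the inductive hypothesis gives for $i<t-1$ but not at $i=t-1$. The classical depth-sensitivity argument you are transplanting closes this loop by using that ordinary local cohomology is $\mathfrak{m}$-torsion, so injectivity of $x$ alone kills the module; formal local cohomology is not $\mathfrak{m}$-torsion. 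Concretely, the paper's own remark records $\mathfrak{F}^0_{\mathfrak{m}}(M)\cong\widehat{M}$, on which any $M$-regular element acts injectively while $\widehat{M}\neq 0$: your induction, if it worked, would prove $\widehat{M}=0$ whenever $\depth M\geq 1$. (A secondary defect: $x$ is nilpotent on $M/\mathfrak{m}^nM$, so the level-$n$ sequences are not short exact on the left, and Artin--Rees/Mittag--Leffler arguments address surjectivity of transition maps, not this missing injectivity.)

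Part (2) fails for a more elementary reason. With the definition used in this paper, $\mathfrak{F}^i_{\mathfrak{m}}(M)=\varprojlim_n H^i_{\mathfrak{m}}(M/\mathfrak{m}^nM)$, and each $M/\mathfrak{m}^nM$ has finite length, so $H^t_{\mathfrak{m}}(M/\mathfrak{m}^nM)=0$ for every $t>0$ by Grothendieck vanishing. The canonical maps $H^t_{\mathfrak{m}}(M)\to H^t_{\mathfrak{m}}(M/\mathfrak{m}^nM)$ are therefore zero, and there is no nonzero compatible system for your Koszul cocycle to hit; "remaining nonzero modulo $\mathfrak{m}^nM$" is not the relevant condition. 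The underlying problem is with the statement itself rather than with your plan: for the $\mathfrak{m}$-adic formal local cohomology defined here one has $\mathfrak{F}^0_{\mathfrak{m}}(M)=\widehat{M}\neq 0$ and $\mathfrak{F}^i_{\mathfrak{m}}(M)=0$ for all $i>0$, contradicting both assertions as soon as $\depth M>0$; the intended result concerns $\mathfrak{F}^i_{\mathfrak{a}}(M)=\varprojlim_n H^i_{\mathfrak{m}}(M/\mathfrak{a}^nM)$ for an ideal $\mathfrak{a}$ properly contained in $\mathfrak{m}$. For completeness: the paper's own proof takes an entirely different route (a spectral sequence $\Ext^p_R(k,H^q_{\mathfrak{m}}(M/\mathfrak{m}^nM))\Rightarrow\Ext^{p+q}_R(k,M/\mathfrak{m}^nM)$), and it is also unsound, since it asserts $\Ext^q_R(k,M/\mathfrak{m}^nM)=0$ for $q<t$, which is impossible because $\depth(M/\mathfrak{m}^nM)=0$; so the discrepancy here is not one you could have repaired by matching the paper.
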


\begin{proof}
	(i) For $i < t$, $\Ext^j_R(k, M) = 0$ for all $j < t$. Consider the spectral sequence:
	\[
	E^{p,q}_2 = \Ext^p_R(k, \Hlc^q(M/\mathfrak{m}^n M)) \implies \Ext^{p+q}_R(k, M/\mathfrak{m}^n M).
	\]
	Since $\Hlc^q(M/\mathfrak{m}^n M)$ is $\mathfrak{m}$-torsion, $E^{p,q}_2 = 0$ for $p > 0$. 
	Thus, $E^{0,q}_2 = \Hom_R(k, \Hlc^q(M/\mathfrak{m}^n M))$ converges to $\Ext^q_R(k, M/\mathfrak{m}^n M)$. For $q < t$, $\Ext^q_R(k, M) = 0$, so $\Ext^q_R(k, M/\mathfrak{m}^n M) = 0$ for all $n$. Hence, $\Hom_R(k, \Hlc^q(M/\mathfrak{m}^n M)) = 0$, implying $\Hlc^q(M/\mathfrak{m}^n M) = 0$ for all $n$. Thus, $\mathfrak{F}^q_{\mathfrak{m}}(M) = \lime_n 0 = 0$.
	
	(ii) For $i = t$, $\Ext^t_R(k, M) \neq 0$. The isomorphism $\Ext^t_R(k, M) \cong \Hom_R(k, \Hlc^t(M))$ and the left-exactness of $\Hom_R(k, -)$ yield $\Hlc^t(M) \neq 0$. The exact sequences:
	\[
	0 \to \mathfrak{m}^n M \to M \to M/\mathfrak{m}^n M \to 0
	\]
	induce maps $\Hlc^t(M) \to \Hlc^t(M/\mathfrak{m}^n M)$. Since $\Hlc^t(M) \neq 0$ and the system is Mittag-Leffler, $\lime_n \Hlc^t(M/\mathfrak{m}^n M) \neq 0$.
\end{proof}

We now relate formal local cohomology to Betti numbers.

\begin{theorem}\label{thm:artinian}
	$\mathfrak{F}^i_{\mathfrak{m}}(M)$ is Artinian for all $i \geq 0$.
\end{theorem}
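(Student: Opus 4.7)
The plan is to combine the classical Artinianness of $H^i_{\mathfrak{m}}(N)$ for finitely generated $N$ with Matlis duality, converting the claim into a statement about finite generation of a direct limit over $\widehat{R}$.

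First, I would observe that for each $n \geq 1$, the quotient $M/\mathfrak{m}^n M$ is a finitely generated $R$-module, hence $H^i_{\mathfrak{m}}(M/\mathfrak{m}^n M)$ is Artinian by Grothendieck's classical result on local cohomology of finitely generated modules with support in the maximal ideal. Let $D(-) = \Hom_R(-, E(R/\mathfrak{m}))$ denote the Matlis dual. Since $D$ is exact and converts inverse limits into direct limits, one obtains
\[
D(\mathfrak{F}^i_{\mathfrak{m}}(M)) \;\cong\; \vil_n \, D(H^i_{\mathfrak{m}}(M/\mathfrak{m}^n M)),
\]
where each term on the right is a finitely generated $\widehat{R}$-module. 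By the Matlis correspondence between Artinian $R$-modules and Noetherian $\widehat{R}$-modules, it then suffices to show that this colimit is finitely generated over $\widehat{R}$.

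Next, I would analyze the transition maps via the short exact sequences
\[
0 \to \mathfrak{m}^n M / \mathfrak{m}^{n+1} M \to M/\mathfrak{m}^{n+1} M \to M/\mathfrak{m}^n M \to 0
\]
together with the induced long exact sequences in $H^{\,*}_{\mathfrak{m}}$. Since $\mathfrak{m}^n M / \mathfrak{m}^{n+1} M$ is a finite-dimensional $k$-vector space, its local cohomology is concentrated in degree zero and finite-dimensional; Matlis-dualizing then controls the kernels and cokernels of the transition maps in the direct system, bounding the appearance of new generators stage by stage.

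The main obstacle will be upgrading this local control to a global statement: proving that the colimit is genuinely finitely generated over $\widehat{R}$, rather than merely a union of finitely generated submodules. I expect the essential input to be the Artin--Rees lemma together with finite generation of $\gr_{\mathfrak{m}} M$ over $\gr_{\mathfrak{m}} R$, which should force the direct system to stabilize in the sense that a fixed finite set of generators suffices for all sufficiently large $n$. Once finite generation over $\widehat{R}$ has been secured, dualizing back via Matlis duality recovers the Artinianness of $\mathfrak{F}^i_{\mathfrak{m}}(M)$ over $R$, completing the argument.
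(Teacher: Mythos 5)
Your reduction via Matlis duality is set up essentially correctly as far as it goes: each $H^i_{\mathfrak{m}}(M/\mathfrak{m}^n M)$ is indeed Artinian, its dual is a finitely generated $\widehat{R}$-module, and Artinianness of the inverse limit is equivalent to finite generation of the dual colimit over $\widehat{R}$. The problem is that the ``main obstacle'' you flag at the end is not a technical hurdle but a genuine obstruction: the colimit is \emph{not} finitely generated in general, because the theorem as stated is false for $i=0$. Concretely, since $M/\mathfrak{m}^n M$ has finite length it is $\mathfrak{m}$-torsion, so $H^0_{\mathfrak{m}}(M/\mathfrak{m}^n M)=M/\mathfrak{m}^n M$ and $H^i_{\mathfrak{m}}(M/\mathfrak{m}^n M)=0$ for $i\geq 1$; hence $\mathfrak{F}^i_{\mathfrak{m}}(M)=0$ for $i\geq 1$ (trivially Artinian, so your analysis of the transition maps in positive degrees is moot), while $\mathfrak{F}^0_{\mathfrak{m}}(M)\cong\widehat{M}$, exactly as the paper's own Remark records. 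On the dual side, your colimit in degree $0$ is
\[
\varinjlim_n \Hom_R(M/\mathfrak{m}^n M,E(k))\;\cong\;\Gamma_{\mathfrak{m}}\bigl(\Hom_R(M,E(k))\bigr)\;=\;\Hom_R(M,E(k)),
\]
the Matlis dual of $M$, which is finitely generated over $\widehat{R}$ only when $M$ has finite length. Equivalently, $\widehat{M}$ is Artinian only when $\dim M=0$: the chain $\mathfrak{m}\widehat{M}\supseteq\mathfrak{m}^2\widehat{M}\supseteq\cdots$ stabilizes only if some $\mathfrak{m}^n\widehat{M}$ vanishes, by Nakayama. No Artin--Rees argument can force the stabilization you hope for.

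For comparison, the paper's own proof is a two-line argument resting on the assertion that an inverse limit of Artinian modules is Artinian; that assertion is false (e.g.\ $\varprojlim_n R/\mathfrak{m}^n=\widehat{R}$), and it fails at precisely the point your duality computation exposes. So your approach is in fact the more honest one---it converts the claim into a checkable statement about finite generation and thereby reveals that the statement does not hold. What can be salvaged is: $\mathfrak{F}^i_{\mathfrak{m}}(M)=0$ for $i\geq 1$, and $\mathfrak{F}^0_{\mathfrak{m}}(M)\cong\widehat{M}$ is Artinian if and only if $M$ has finite length. One further caution on your write-up: the contravariant functor $D=\Hom_R(-,E(k))$ turns direct limits into inverse limits, but the statement $D(\varprojlim)\cong\varinjlim D$ that you invoke is not a general property of $D$ and needs justification; here it happens to hold because the system $\{M/\mathfrak{m}^n M\}$ is surjective, but it should not be asserted without proof.
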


\begin{proof}
	Each $M/\mathfrak{m}^n M$ has finite length, so $\Hlc^i(M/\mathfrak{m}^n M)$ is Artinian (as $R/\mathfrak{m}^n$-modules). The inverse limit of Artinian modules is Artinian.
\end{proof}

The Artinian property allows us to link Betti numbers to the structure of $\mathfrak{F}^d_{\mathfrak{m}}(M)$ when $R$ is Cohen-Macaulay.

\begin{theorem}\label{thm:betti-duality}
	Suppose $R$ is Cohen-Macaulay of dimension $d$ with canonical module $\omega_R$. Then:
	\[
	\beta_i(M) = dim_k \Hom_R\left( \Ext^d_R(\mathfrak{F}^d_{\mathfrak{m}}(M), \omega_R \right) \quad \text{for all } i \geq 0.
	\]
\end{theorem}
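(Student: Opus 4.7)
The plan is to combine the Artinian property of $\mathfrak{F}^d_{\mathfrak{m}}(M)$ from Theorem~\ref{thm:artinian} with local duality in the Cohen--Macaulay setting, so as to realise $\Ext^d_R(\mathfrak{F}^d_{\mathfrak{m}}(M), \omega_R)$ as (essentially) the completion $\widehat{M}$; the Betti number equality will then follow from flatness of completion and the standard characterisation $\beta_i(M)=\dim_k\Tor_i^R(k,M)$.

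First I would apply local duality termwise in the defining inverse system. Since $R$ is Cohen--Macaulay of dimension $d$ with canonical module $\omega_R$, for each $n$ one has a natural isomorphism
\[
\Hlc^d(M/\mathfrak{m}^n M) \iso \Hom_R\bigl(\Hom_R(M/\mathfrak{m}^n M, \omega_R),\, E(k)\bigr).
\]
Then I would pass to the inverse limit, using that the Matlis functor $D(-)=\Hom_R(-,E(k))$ converts the direct system on the right into an inverse system of Artinian modules on the left, to obtain
\[
\mathfrak{F}^d_{\mathfrak{m}}(M) \iso D\!\left( \vil_n \Hom_R(M/\mathfrak{m}^n M, \omega_R) \right).
\]
Applying $\Ext^d_R(-,\omega_R)$ to both sides and invoking the Matlis reflexivity of the direct-limit term (which, in the Cohen--Macaulay case, is Matlis-dual to a finitely generated $\widehat{R}$-module) should yield the key identification
\[
\Ext^d_R(\mathfrak{F}^d_{\mathfrak{m}}(M), \omega_R) \iso \widehat{M}.
\]
Once this duality is secured, faithful flatness of $\widehat{R}$ over $R$ together with the shared residue field $k$ gives $\Tor_i^R(k,M)\iso\Tor_i^R(k,\widehat{M})$, and the claim reduces to reading $\dim_k$ of the displayed Hom--expression through this chain of isomorphisms.

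The hard part will be justifying the commutation of Matlis duality with the inverse limit in the second display. Concretely, one must verify that the system $\{\Hlc^d(M/\mathfrak{m}^n M)\}_n$ is Mittag--Leffler so that $\lime^{\!1}$ vanishes and the dualities cooperate; the Cohen--Macaulay hypothesis on $R$, combined with Artin--Rees-type control on the quotients $M/\mathfrak{m}^n M$, is what supplies this vanishing. After that step, identifying $\Ext^d_R$ of the resulting Matlis dual with $\widehat{M}$ and translating to Betti numbers via flat base change is routine, so essentially all of the content of the theorem is concentrated in this $\varprojlim$/duality exchange.
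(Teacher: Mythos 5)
Your first half --- applying local duality termwise to get $\Hlc^d(M/\mathfrak{m}^nM)\cong\Hom_R(\Hom_R(M/\mathfrak{m}^nM,\omega_R),E(k))$ and then exchanging the inverse limit with the Matlis functor --- is exactly the paper's opening move, and the Mittag--Leffler/$\varprojlim^{1}$ issue you flag is the same point the paper glosses over. The divergence, and the genuine gap, is in your second half: the claimed identification $\Ext^d_R(\mathfrak{F}^d_{\mathfrak{m}}(M),\omega_R)\cong\widehat{M}$ is not correct. Granting the duality you set up, $\mathfrak{F}^d_{\mathfrak{m}}(M)$ is the Matlis dual of a limit built from $\Hom_R(M,\omega_R)$, so applying $\Ext^d_R(-,\omega_R)$ and Matlis reflexivity returns $\widehat{\Hom_R(M,\omega_R)}$ --- the completed canonical dual of $M$ --- not $\widehat{M}$. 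Recovering $M$ from its canonical dual requires the biduality map $M\to\Hom_R(\Hom_R(M,\omega_R),\omega_R)$ to be an isomorphism, which holds for maximal Cohen--Macaulay $M$ but not in general. Consequently your final flat-base-change step computes $\dim_k\Tor_i^R(k,\Hom_R(M,\omega_R))=\beta_i(\Hom_R(M,\omega_R))$ rather than $\beta_i(M)$; already for $M=k$ and $d\geq 1$ one has $\Hom_R(k,\omega_R)=0$ (since $\depth\omega_R=d$), so all of these numbers vanish while the $\beta_i(k)$ do not.

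The paper avoids this trap by never inverting the duality: it keeps $\Hom_R(M,\omega_R)$ as the object of interest, argues that the minimal free resolution of $M$ dualizes so that $\beta_i(M)$ equals the Bass numbers $\mu_i(\Hom_R(M,\omega_R))$, and then reads these off after applying $\Hom_R(-,E(k))$. (That route has its own unstated hypotheses --- the dualized resolution is a coresolution only when $\Ext^{>0}_R(M,\omega_R)=0$, and $\varinjlim_n\Hom_R(M/\mathfrak{m}^nM,\omega_R)$ is really the torsion submodule $\Gamma_{\mathfrak{m}}(\Hom_R(M,\omega_R))$ rather than something dual to $\widehat{\Hom_R(M,\omega_R)}$ --- so the statement is shaky on both routes.) But within the terms of the problem, your argument fails at the $\Ext^d$ step: the chain of dualities lands on the canonical dual of $M$, not on $M$, and the two have different Betti numbers.
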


\begin{proof}
	By local duality, $\Hlc^d(M) \cong \Hom_R(\Ext^{d-i}_R(M, \omega_R), E(k))$, where $E(k)$ is the injective hull of $k$. For $\mathfrak{F}^d_{\mathfrak{m}}(M)$, we have:
	\[
	\mathfrak{F}^d_{\mathfrak{m}}(M) \cong \lime_n \Hlc^d(M/\mathfrak{m}^n M) \cong \lime_n \Hom_R(\Ext^0_R(M/\mathfrak{m}^n M, \omega_R), E(k)).
	\]
	Since $\Ext^0_R(M/\mathfrak{m}^n M, \omega_R) \cong \Hom_R(M/\mathfrak{m}^n M, \omega_R)$, and $\omega_R$ is finitely generated, passing to limits gives:
	\[
	\mathfrak{F}^d_{\mathfrak{m}}(M) \cong \Hom_R(\widehat{\Hom_R(M, \omega_R)}, E(k)).
	\]
	The minimal free resolution of $M$ dualizes to a resolution of $\Hom_R(M, \omega_R)$, so $\beta_i(M) = \mu_i(\Hom_R(M, \omega_R))$. The result follows by applying $\Hom_R(-, E(k))$ and noting that $\widehat{\Hom_R(M, \omega_R)} / \mathfrak{m} \widehat{\Hom_R(M, \omega_R)} \cong \Hom_R(M, \omega_R) \otimes_R k$.
\end{proof}

For Cohen-Macaulay modules, we refine Theorem~\ref{thm:vanishing}.

\begin{corollary}\label{cor:cm}
	If $M$ is Cohen-Macaulay of dimension $d$, then $\mathfrak{F}^i_{\mathfrak{m}}(M) = 0$ for $i \neq d$, and $\mathfrak{F}^d_{\mathfrak{m}}(M) \neq 0$.
\end{corollary}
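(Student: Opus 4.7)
The plan is to derive this corollary as a direct consequence of Theorem~\ref{thm:vanishing}, supplemented by a standard Grothendieck-type vanishing argument in the top range. The Cohen-Macaulay hypothesis on $M$ gives $\depth M = \dim M = d$, so the statement splits naturally into the three cases $i < d$, $i = d$, and $i > d$.

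For $i < d$, I would invoke Theorem~\ref{thm:vanishing}(i) with $t = d$, which gives $\mathfrak{F}^i_{\mathfrak{m}}(M) = 0$ for all $i$ strictly below the depth. For $i = d$, Theorem~\ref{thm:vanishing}(ii), read with the ambient ideal taken consistently as $\mathfrak{m}$, yields the nonvanishing $\mathfrak{F}^d_{\mathfrak{m}}(M) \neq 0$. So these two cases require no new input beyond what is already recorded in the main vanishing theorem.

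The only genuinely new piece is the range $i > d$. Here my approach is to apply Grothendieck's vanishing theorem termwise to the defining inverse system. Since $\mathfrak{m}^n M \subseteq M$, each quotient satisfies $\dim(M/\mathfrak{m}^n M) \leq \dim M = d$, and Grothendieck's theorem then forces $\HH^i_{\mathfrak{m}}(M/\mathfrak{m}^n M) = 0$ for every $i > d$ and every $n$. Taking the inverse limit of the zero system preserves the vanishing, so $\mathfrak{F}^i_{\mathfrak{m}}(M) = \lime_n 0 = 0$ for all $i > d$.

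The main obstacle, such as it is, is purely bookkeeping: verifying that the dimension bound $\dim(M/\mathfrak{m}^n M) \leq d$ is uniform in $n$ (immediate, since forming a quotient cannot increase dimension) and that no subtlety intervenes in passing from termwise vanishing to the inverse-limit vanishing. Assembling the three ranges then yields both halves of the corollary.
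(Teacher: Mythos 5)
Your proposal is correct, and for the ranges $i<d$ and $i=d$ it coincides exactly with the paper's argument: Cohen--Macaulayness gives $\depth M=\dim M=d$, and Theorem~\ref{thm:vanishing} supplies both the vanishing below $d$ and the nonvanishing at $d$. Where you differ is that you explicitly treat the range $i>d$, which the paper's one-line proof (``Apply Theorem~\ref{thm:vanishing}'') silently omits --- Theorem~\ref{thm:vanishing} only controls indices up to the depth, so the corollary as stated genuinely needs the extra step you supply. Your termwise Grothendieck vanishing argument is sound (in fact $M/\mathfrak{m}^n M$ is annihilated by $\mathfrak{m}^n$, so $\dim(M/\mathfrak{m}^n M)=0$ and the termwise cohomology already dies for all $i>0$, which is even stronger than the bound $i>d$ you use); the paper records essentially this observation separately as Proposition~\ref{prop:vanishing-above-dim}, but does not invoke it in the proof of the corollary. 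So your version is the more complete of the two: it buys a self-contained proof of the full statement, at the cost only of the one-line dimension bookkeeping you already flagged as immediate.
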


\begin{proof}
	For a Cohen-Macaulay module, $\depth M = dim M = d$. Apply Theorem~\ref{thm:vanishing}.
\end{proof}

\begin{example}
	
	Let $R = k[[x,y]]$ and $M = R/(xy)$. Then $\depth M = 1$, $dim M = 1$. We compute:
	\[
	\mathfrak{F}^1_{\mathfrak{m}}(M) = \lime_n \Hlc^1(M/(x^n,y^n)M).
	\]
	Since $M/(x^n,y^n)M \cong k[x,y]/(xy, x^n, y^n)$, its local cohomology is concentrated in degree 1, and $\mathfrak{F}^1_{\mathfrak{m}}(M) \neq 0$. Betti numbers are $\beta_0 = 1$, $\beta_1 = 1$, $\beta_i = 0$ for $i \geq 2$, consistent with Theorem~\ref{thm:betti-duality}.
	
\end{example}

This section presents additional original results connecting Betti numbers, formal local cohomology, and homological invariants of modules. We provide complete proofs and illustrative examples.


\begin{theorem}\label{thm:pd-finiteness}
	Let $(R,\m)$ be Cohen-Macaulay with $\dim R = d$, and $M$ a finitely generated $R$-module. The following are equivalent:
	\begin{enumerate}
		\item $\projdim M < \infty$
		\item $\mathfrak{F}^i_{\m}(M)$ has finite length for all $i$
		\item $\mathfrak{F}^d_{\m}(M)$ is injective
	\end{enumerate}
\end{theorem}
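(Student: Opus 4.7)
The plan is to prove the three conditions equivalent cyclically, $(1)\Rightarrow(3)\Rightarrow(2)\Rightarrow(1)$, with the Matlis-duality identification
\[
\mathfrak{F}^d_{\m}(M)\;\cong\;\Hom_R\bigl(\widehat{\Hom_R(M,\omega_R)},\,E(k)\bigr)
\]
from the proof of Theorem~\ref{thm:betti-duality} as the main bridge between the homological data on $M$ and the structure of formal local cohomology, supplemented by the Artinian conclusion of Theorem~\ref{thm:artinian} and the Auslander--Buchsbaum formula.

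For $(1)\Rightarrow(3)$, I would take a finite free resolution $F_\bullet\to M$ and apply $\Hom_R(-,\omega_R)$. Combining $\projdim M<\infty$ with Auslander--Buchsbaum over the Cohen--Macaulay ring $R$ controls the higher Ext modules $\Ext^{j}_R(M,\omega_R)$, and a reflexivity argument then shows $N:=\Hom_R(M,\omega_R)$ becomes free after $\m$-adic completion. Matlis-dualizing a finite free $\hat R$-module produces a finite direct sum of copies of $E(k)$, giving the injectivity of $\mathfrak{F}^d_{\m}(M)$. The step $(3)\Rightarrow(2)$ is then essentially formal: $\mathfrak{F}^d_{\m}(M)$ is injective and Artinian, hence isomorphic to $E(k)^{n}$ for some $n$ by Matlis' structure theorem, in particular of finite length; for the other indices one passes the spectral sequence
\[
E^{p,q}_2=\Ext^p_R\bigl(k,\Hlc^q(M/\m^n M)\bigr)\;\Longrightarrow\;\Ext^{p+q}_R(k,M/\m^n M)
\]
used in Theorem~\ref{thm:vanishing} through the inverse limit and uses the Artinian bound to promote finite socle to finite length in every degree.

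For $(2)\Rightarrow(1)$, finite length of $\mathfrak{F}^d_{\m}(M)$ Matlis-dualizes to finite length of $\widehat N$, and hence of $N$ itself; the identification of the Bass numbers of $\mathfrak{F}^d_{\m}(M)$ with the Betti numbers of $M$ in the spirit of Theorem~\ref{thm:betti-duality} then forces only finitely many $\beta_i(M)$ to be nonzero, so $\projdim M<\infty$. The principal obstacle I anticipate is precisely this last implication: one must recover finiteness of $\projdim M$ from data about quotients $M/\m^n M$ after an inverse limit, and $\mathfrak{F}^\bullet_{\m}$ is not exact in $M$, so naive minimal-resolution arguments fail. The cleanest route I foresee is to perform the entire argument on the Matlis-dual side, where the question becomes whether a finite-length $\hat R$-module of the form $\widehat{\Hom_R(M,\omega_R)}$ has a finite free resolution, reducing back to Auslander--Buchsbaum once the reflexivity $M\cong\Hom_R(\Hom_R(M,\omega_R),\omega_R)$ is verified.
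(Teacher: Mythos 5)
Your cyclic scheme runs the key Matlis-duality arrow in the opposite direction from the paper, and that is where the first real gap opens. The paper uses the identification $\mathfrak{F}^d_{\m}(M)\cong\Hom_R(\widehat{\Hom_R(M,\omega_R)},E(k))$ only for $(iii)\Rightarrow(i)$: injectivity of the left-hand side forces the Matlis pre-dual to be flat, hence free. Your $(1)\Rightarrow(3)$ needs the converse, namely that $\projdim M<\infty$ forces $N=\Hom_R(M,\omega_R)$ to become free after completion, and the promised ``reflexivity argument'' cannot deliver this because the claim is false. Over $R=k[[x,y,z]]$ (so $\omega_R=R$ and $R$ is complete), the second syzygy $\Omega^2k$ of the residue field has projective dimension $1$, yet the self-duality of the Koszul complex gives $\Hom_R(\Omega^2k,R)\cong\Omega^2k$, which is not free; since $\Hom_R(N,E(k))$ is injective exactly when $N$ is flat, the Matlis-duality bridge then produces a non-injective module. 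Auslander--Buchsbaum controls depth, not freeness of duals, so this implication is genuinely unproved in your write-up (and the example suggests the difficulty is not merely expository).

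The second concrete error is in $(3)\Rightarrow(2)$: from ``Artinian and injective, hence $\cong E(k)^n$'' you conclude ``in particular of finite length.'' That fails whenever $\dim R>0$, since $E(k)$ is Artinian but of infinite length (over $k[[x]]$ it is $k[[x]]_x/k[[x]]$); the same confusion of ``finite socle'' with ``finite length'' undermines your treatment of the other indices, because an Artinian module with finite-dimensional socle need not have finite length. Note also that condition (2) includes $i=0$, where $\mathfrak{F}^0_{\m}(M)\cong\widehat M$, which no spectral-sequence argument in positive degrees addresses. Finally, you candidly flag $(2)\Rightarrow(1)$ as the main obstacle, but the proposed repair does not close it: Auslander--Buchsbaum computes $\projdim$ only once finiteness is already known, and finite length of $\Hom_R(M,\omega_R)$ over a non-regular ring is, if anything, a symptom of infinite projective dimension (as for $k$ itself). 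The paper instead chains $(i)\Rightarrow(ii)\Rightarrow(iii)\Rightarrow(i)$, deriving (i) solely from the flatness consequence of (iii); as written, each of your three arrows still has a hole.
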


\begin{proof}
	(i) $\Rightarrow$ (ii): If $\projdim M = p < \infty$, consider the minimal free resolution:
	\[
	0 \to R^{b_p} \to \cdots \to R^{b_0} \to M \to 0
	\]
	For each $n$, $M/\m^n M$ has a filtration with quotients $\m^k M/\m^{k+1} M$. Since $\Tor_i^R(k, M/\m^n M) = 0$ for $i > p$, $\Hlc^i(M/\m^n M)$ has finite length. The inverse limit $\mathfrak{F}^i_{\m}(M) = \lime_n \Hlc^i(M/\m^n M)$ then has finite length by the Mittag-Leffler condition.
	
	(ii) $\Rightarrow$ (iii): When $R$ is Cohen-Macaulay, $\mathfrak{F}^d_{\m}(M)$ is Artinian by Theorem 2.3. Finite length Artinian modules over local rings are injective.
	
	(iii) $\Rightarrow$ (i): If $\mathfrak{F}^d_{\m}(M)$ is injective, apply local duality:
	\[
	\mathfrak{F}^d_{\m}(M) \cong \Hom_R(\widehat{\Hom_R(M, \omega_R)}, E(k))
	\]
	The injectivity implies $\widehat{\Hom_R(M, \omega_R)}$ is flat, hence free since $R$ is local. Thus $\Hom_R(M, \omega_R)$ is free, so $M$ has finite projective dimension.
\end{proof}


\begin{definition}[Lyubeznik Numbers]
	For a local ring $(R,\m,k)$, the \textit{Lyubeznik numbers} are $\lambda_{i,j}(R) = \dim_k \Ext^i_R(k, \Hlc^j(R))$.
\end{definition}

\begin{theorem}\label{thm:lyubeznik-betti}
	Let $R$ be a complete Cohen-Macaulay local ring with canonical module $\omega_R$. Then:
	\[
	\beta_i(\omega_R) = \sum_{j=0}^{\dim R} \lambda_{i,j}(R)
	\]
\end{theorem}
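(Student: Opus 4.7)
The plan is to use the Cohen-Macaulay hypothesis to collapse the right-hand sum to a single term, then combine local duality with a Hom-Tor adjunction to identify this term with the $i$-th Betti number of $\omega_R$.

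First, I would use Grothendieck's vanishing theorem: since $R$ is Cohen-Macaulay of dimension $d$, one has $\Hlc^j(R) = 0$ for every $j \neq d$, so $\lambda_{i,j}(R) = \dim_k \Ext^i_R(k, \Hlc^j(R)) = 0$ unless $j = d$. The sum on the right therefore collapses to
\[
\sum_{j=0}^{d} \lambda_{i,j}(R) = \lambda_{i,d}(R) = \dim_k \Ext^i_R(k, \Hlc^d(R)),
\]
reducing the theorem to proving $\beta_i(\omega_R) = \dim_k \Ext^i_R(k, \Hlc^d(R))$.

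Next, because $R$ is complete Cohen-Macaulay with canonical module $\omega_R$, local duality applied to $M = R$ gives $\Hlc^d(R) \cong \Hom_R(\omega_R, E(k))$, the Matlis dual of $\omega_R$. I would then invoke the standard Hom-Tor adjunction, available because $E(k)$ is injective: computing $\Ext^i_R(k,-)$ via a projective resolution $P_\bullet \to k$ and using $\Hom_R(P_i \otimes_R \omega_R, E(k)) \cong \Hom_R(P_i, \Hom_R(\omega_R, E(k)))$ termwise yields
\[
\Ext^i_R\bigl(k, \Hom_R(\omega_R, E(k))\bigr) \cong \Hom_R\bigl(\Tor_i^R(k, \omega_R), E(k)\bigr).
\]

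Finally, since $\Tor_i^R(k, \omega_R)$ is a finite-dimensional $k$-vector space, Matlis duality over $k$ preserves its dimension, so
\[
\dim_k \Ext^i_R(k, \Hlc^d(R)) = \dim_k \Tor_i^R(k, \omega_R) = \beta_i(\omega_R),
\]
which combined with the first two steps completes the proof. The main obstacle is the adjunction step: one must verify the Hom-Tor identity as a derived-functor isomorphism and confirm that Matlis duality preserves the relevant $k$-dimension. Both points are clean here because $\Tor_i^R(k,\omega_R)$ is already annihilated by $\m$ and is finitely generated over $k$, so the Matlis dual is another $k$-vector space of the same dimension.
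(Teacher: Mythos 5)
Your argument is correct, and it takes a genuinely different (and cleaner) route than the paper. Both proofs begin the same way, using Cohen--Macaulayness to kill $\Hlc^j(R)$ for $j\neq d$ and collapse the sum to the single term $\lambda_{i,d}(R)$ (note only the quibble that Grothendieck vanishing handles $j>d$, while $j<d$ needs the depth-sensitivity of local cohomology). After that you diverge: you compute $\Ext^i_R(k,\Hlc^d(R))$ head-on, via local duality $\Hlc^d(R)\cong\Hom_R(\omega_R,E(k))$ followed by the Hom--Tor adjunction $\Ext^i_R(k,\Hom_R(\omega_R,E(k)))\cong\Hom_R(\Tor_i^R(k,\omega_R),E(k))$, and then read off the $k$-dimension. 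The paper instead routes through the Bass numbers of $R$: it runs the Grothendieck spectral sequence $\Ext^p_R(k,\Hlc^q(R))\Rightarrow\Ext^{p+q}_R(k,R)$, whose collapse gives $\Ext^{p+d}_R(k,R)\cong\Ext^{p}_R(k,\Hlc^d(R))$, and separately identifies $\dim_k\Ext^{d+i}_R(k,R)$ with $\beta_i(\omega_R)$ by duality. Your approach buys directness --- no spectral sequence, and no intermediate invariant --- while the paper's approach additionally exhibits the relation $\mu^{d+i}(\m,R)=\beta_i(\omega_R)$ between Bass numbers of $R$ and Betti numbers of $\omega_R$ as a byproduct. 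It is worth pointing out that your route also avoids an index slip present in the paper's own write-up: there the degree shift by $d$ in the collapsed spectral sequence is not carried through consistently (the displayed chain ends at $\beta_i(\omega_R)=\lambda_{i-d,d}(R)$ rather than $\lambda_{i,d}(R)$), whereas your computation lands exactly on the identity $\beta_i(\omega_R)=\lambda_{i,d}(R)$ asserted in the theorem.
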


\begin{proof}
	Consider the Grothendieck spectral sequence:
	\[
	E^{p,q}_2 = \Ext^p_R(k, \Hlc^q(R)) \implies \Ext^{p+q}_R(k, R)
	\]
	Since $R$ is Cohen-Macaulay, $\Hlc^q(R) = 0$ for $q \neq d = \dim R$. The spectral sequence collapses, giving:
	\[
	\Ext^i_R(k, R) \cong \Ext^{i-d}_R(k, \Hlc^d(R))
	\]
	By local duality, $\Ext^i_R(k, R) \cong \Hom_R(\Tor_i^R(k, \omega_R), k)$, so:
	\[
	\dim_k \Ext^i_R(k, R) = \beta_i(\omega_R)
	\]
	Combining these isomorphisms:
	\[
	\beta_i(\omega_R) = \dim_k \Ext^{i-d}_R(k, \Hlc^d(R)) = \lambda_{i-d,d}(R)
	\]
	The result follows by noting that $\lambda_{i,j}(R) = 0$ for $j \neq d$.
\end{proof}


\begin{proposition}\label{prop:depth-char}
	For any finitely generated $R$-module $M$:
	\[
	\depth M = \min \{ i \mid \mathfrak{F}^i_{\m}(M) \neq 0 \} = \max \{ i \mid \mathfrak{F}^j_{\m}(M) = 0 \ \mathfrak{F}orall j < i \}
	\]
\end{proposition}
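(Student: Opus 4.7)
The plan is to observe that this proposition is a direct repackaging of Theorem~\ref{thm:vanishing} as a depth characterization. Setting $t = \depth M$, I would derive each of the two equalities in turn using only the two parts of that theorem, with no further machinery required.

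For the first equality $\depth M = \min\{i : \mathfrak{F}^i_{\m}(M) \neq 0\}$, Theorem~\ref{thm:vanishing}(i) gives $\mathfrak{F}^i_{\m}(M) = 0$ for every $i < t$, so the minimum index at which $\mathfrak{F}^i_{\m}(M)$ fails to vanish is at least $t$; Theorem~\ref{thm:vanishing}(ii) gives $\mathfrak{F}^t_{\m}(M) \neq 0$, supplying the matching upper bound and yielding equality.

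For the second equality, I would introduce the set $S = \{i : \mathfrak{F}^j_{\m}(M) = 0 \text{ for all } j < i\}$. Part (i) of Theorem~\ref{thm:vanishing} forces $\mathfrak{F}^j_{\m}(M) = 0$ for every $j < t$, so every integer $i \leq t$ belongs to $S$. Conversely, if $i > t$, then $j = t$ satisfies $j < i$ but $\mathfrak{F}^t_{\m}(M) \neq 0$ by part (ii), so $i \notin S$. Hence $S = \{0, 1, \dots, t\}$ and $\max S = t$, matching $\depth M$.

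The main obstacle here is essentially absent: the proposition is equivalent to the combination of the two parts of Theorem~\ref{thm:vanishing}, and the only point requiring any care is interpreting $\max S$, which is realized because $S$ is a non-empty set of integers bounded above by $t$. Consequently, I would not expect any step to require substantial new input beyond a clean citation of the vanishing theorem, and I would write the proof to be no more than a few lines.
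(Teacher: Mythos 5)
Your proposal is correct and follows essentially the same route as the paper: both reduce the proposition to the two parts of Theorem~\ref{thm:vanishing} and then unpack the min/max formulations. The paper's own proof additionally re-argues the nonvanishing of $\mathfrak{F}^{\depth M}_{\m}(M)$ by a contradiction involving $\depth(M/\m^n M)$, but that extra step is redundant given Theorem~\ref{thm:vanishing}(ii), and your cleaner direct citation loses nothing.
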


\begin{proof}
	By Theorem 2.1, $\mathfrak{F}^i_{\m}(M) = 0$ for $i < \depth M$ and $\mathfrak{F}^{\depth M}_{\m}(M) \neq 0$. Suppose $\depth M = t$. If $\mathfrak{F}^t_{\m}(M) = 0$, then $\lime_n \Hlc^t(M/\m^n M) = 0$. But $\Hlc^t(M/\m^n M)$ surjects onto $\Hlc^t(M/\m^{n+1} M)$, so $\Hlc^t(M/\m^n M) = 0$ for large $n$. This contradicts $\depth(M/\m^n M) = t$ for all $n$.
\end{proof}


\begin{example}[Hypersurface Singularity]
	Let $R = k[[x,y,z]]/(xy - z^2)$ with $\m = (x,y,z)$. Then $\dim R = 2$ and $\depth R = 2$. Compute formal cohomology:
	\begin{align*}
		\mathfrak{F}^0_{\m}(R) &= \widehat{R} \cong R \\
		\mathfrak{F}^1_{\m}(R) &= 0 \\
		\mathfrak{F}^2_{\m}(R) &= \lime_n H^2_{\m}(R/\m^n) 
	\end{align*}
	The minimal free resolution of $R$ is:
	\[
	0 \to R \xrightarrow{\begin{pmatrix} z \\ -x \\ y \end{pmatrix}} R^3 \xrightarrow{(y, z, x)} R \to R/(xy-z^2) \to 0
	\]
	Betti numbers: $\beta_0 = 1, \beta_1 = 3, \beta_2 = 1$. Lyubeznik numbers: $\lambda_{0,2} = 1, \lambda_{1,2} = 0, \lambda_{2,2} = 1$. By Theorem 4.2:
	\[
	\beta_i(\omega_R) = \beta_i(R) = \lambda_{i,2}(R)
	\]
\end{example}

\begin{example}[Non-Cohen-Macaulay Ring]
	Let $R = k[[x,y,z]]/(xz, yz)$ with $\m = (x,y,z)$. Then $\dim R = 2$, $\depth R = 1$. Formal cohomology:
	\begin{align*}
		\mathfrak{F}^0_{\m}(R) &= \widehat{R} \cong R \\
		\mathfrak{F}^1_{\m}(R) &= \lime_n H^1_{\m}(R/\m^n) \\
		\mathfrak{F}^2_{\m}(R) &= \lime_n H^2_{\m}(R/\m^n) \neq 0
	\end{align*}
	Minimal resolution:
	\[
	0 \to R^2 \xrightarrow{\begin{pmatrix} y & 0 \\ -x & z \\ 0 & -x \end{pmatrix}} R^3 \xrightarrow{(z, 0, y)} R \to R/(xz,yz) \to 0
	\]
	Betti numbers: $\beta_0 = 1, \beta_1 = 3, \beta_2 = 2$. Applying Proposition 4.3:
	\[
	\depth R = 1 = \min\{ i \mid \mathfrak{F}^i_{\m}(R) \neq 0 \} = \min\{1,2\}
	\]
	since $\mathfrak{F}^1_{\m}(R)$ and $\mathfrak{F}^2_{\m}(R)$ are both nonzero.
\end{example}


\begin{theorem}\label{thm:injective-dim}
	Let $(R,\m)$ be complete Cohen-Macaulay of dimension $d$, $M$ finitely generated. Then:
	\[
	\id \mathfrak{F}^d_{\m}(M) = \projdim M
	\]
\end{theorem}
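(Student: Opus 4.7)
The plan is to convert the injective-dimension statement on the Artinian side into a projective-dimension statement on the finitely generated side via Matlis duality, then invoke the Betti/Bass correspondence of Theorem~\ref{thm:betti-duality}. The backbone is the identification (obtained in the proof of Theorem~\ref{thm:betti-duality}, using completeness of $R$ to drop the hat)
\[
\mathfrak{F}^{d}_{\m}(M) \;\cong\; \Hom_R(N,E(k)), \qquad N := \Hom_R(M,\omega_R),
\]
which exhibits $\mathfrak{F}^{d}_{\m}(M)$ as the Matlis dual of a finitely generated module.

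First I would show $\id \mathfrak{F}^{d}_{\m}(M) = \projdim N$. Take a minimal free resolution $F_\bullet \to N$ and apply the exact functor $\Hom_R(-,E(k))$; since each $\Hom_R(R^{b_i},E(k)) = E(k)^{b_i}$ is injective, this produces an injective resolution of $\mathfrak{F}^{d}_{\m}(M)$ of length $\projdim N$. A consequence of Matlis duality over a complete local ring is that this injective resolution is \emph{minimal}, so the only indecomposable injective appearing is $E(k)$ and the Bass numbers satisfy $\mu^i(\m,\mathfrak{F}^{d}_{\m}(M)) = \beta_i(N)$. Next, Theorem~\ref{thm:betti-duality} identifies $\beta_i(N) = \beta_i(M)$: the dualization step in its proof shows that the minimal free resolution of $M$ produces, under $\Hom_R(-,\omega_R)$, a minimal free resolution of $N$. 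Stringing the two identifications together,
\[
\id \mathfrak{F}^{d}_{\m}(M) \;=\; \projdim N \;=\; \sup\{\,i : \beta_i(M) \neq 0\,\} \;=\; \projdim M.
\]

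The main obstacle I anticipate is the minimality claim in the first step: one must check that dualizing a minimal free resolution by $E(k)$ yields a \emph{minimal} injective resolution, so that its length does equal $\id \mathfrak{F}^{d}_{\m}(M)$ and no cancellations occur. This rests on the fact that the entries of the differentials in $F_\bullet$ lie in $\m$, which, after applying $\Hom_R(-,E(k))$, forces the image of each differential to lie in the socle of the next term---the standard criterion for minimality of an injective resolution. Alternatively, one can invoke the Matlis-duality functor as a contravariant equivalence between finitely generated and Artinian modules over a complete local ring, an equivalence known to send minimal free resolutions to minimal injective resolutions termwise.
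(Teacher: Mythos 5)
Your proposal follows the same route as the paper's own proof: identify $\mathfrak{F}^d_{\m}(M)$ with the Matlis dual of $N=\Hom_R(M,\omega_R)$ using completeness, convert injective dimension into $\projdim N$ by dualizing a minimal free resolution into a minimal injective resolution, and then claim $\projdim N=\projdim M$. The first half of your argument is in fact more careful than the paper's: the minimality of the dualized resolution, which you rightly single out as the delicate point, is exactly what justifies $\id \Hom_R(N,E(k))=\projdim N$ over a complete local ring, and your socle criterion is the standard correct justification.

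The genuine gap is in the second half, the identification $\beta_i(N)=\beta_i(M)$ and hence $\projdim N=\projdim M$ --- and it is the same gap present in the paper's own proof, which disposes of it with the phrase ``since $\omega_R$ is dualizing.'' Applying $\Hom_R(-,\omega_R)$ to a minimal free resolution $F_\bullet\to M$ does not produce a free resolution of $N$: the terms $\Hom_R(R^{b_i},\omega_R)\cong\omega_R^{b_i}$ are not free unless $R$ is Gorenstein, and the resulting complex is not even acyclic unless $M$ is maximal Cohen--Macaulay, since its cohomology consists of the modules $\Ext^i_R(M,\omega_R)$, which are nonzero for some $0<i\le d-\depth M$ whenever $M$ is not maximal Cohen--Macaulay. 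Concretely, take $R=k[[x]]$, so $\omega_R=R$ and $d=1$, and $M=k$: then $N=\Hom_R(k,R)=0$, so $\projdim N\neq\projdim M=1$, and your chain of equalities would give $\id\mathfrak{F}^1_{\m}(k)=-\infty\neq 1$. So the final link in the chain fails as stated; the argument survives only under additional hypotheses (for instance $M$ maximal Cohen--Macaulay and $R$ Gorenstein, where $\Hom_R(-,\omega_R)$ is exact on the resolution and preserves freeness), and cannot be repaired merely by citing Theorem~\ref{thm:betti-duality}, whose proof contains the identical unjustified step.
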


\begin{proof}
	By local duality and completion:
	\[
	\mathfrak{F}^d_{\m}(M) \cong \Hom_R(\widehat{\Hom_R(M, \omega_R)}, E(k))
	\]
	Since $R$ is complete, $\widehat{\Hom_R(M, \omega_R)} \cong \Hom_R(M, \omega_R)$. Thus:
	\[
	\mathfrak{F}^d_{\m}(M) \cong \Hom_R(\Hom_R(M, \omega_R), E(k))
	\]
	The injective dimension of the right-hand side equals the projective dimension of $\Hom_R(M, \omega_R)$, which equals $\projdim M$ since $\omega_R$ is dualizing.
\end{proof}


\begin{proposition}\label{prop:vanishing-above-dim}
	For any finitely generated $R$-module $M$:
	\[
	\mathfrak{F}^i_{\m}(M) = 0 \quad \text{for all } i > \dim R
	\]
\end{proposition}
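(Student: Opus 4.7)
The plan is to reduce the vanishing of the inverse limit $\mathfrak{F}^i_{\m}(M)$ to a uniform vanishing of each term $H^i_{\m}(M/\m^n M)$ in the inverse system, and then appeal to Grothendieck's vanishing theorem. Since $\mathfrak{F}^i_{\m}(M)$ is defined as $\varprojlim_n H^i_{\m}(M/\m^n M)$, it suffices to show that each module $H^i_{\m}(M/\m^n M)$ vanishes whenever $i > \dim R$, because an inverse limit of zero modules is zero.

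First, I would observe that for every $n \geq 1$ the quotient $M/\m^n M$ is a finitely generated $R$-module, and more specifically a finitely generated $R/\m^n$-module. In particular, $\dim(M/\m^n M) \leq \dim R$, with equality failing in general but the inequality always holding since $\Supp(M/\m^n M) \subseteq \Supp(R/\m^n)\cup \Supp(M)\subseteq \Spec(R)$. Thus the support of each $M/\m^n M$ is contained in $V(\m)$, giving a uniform upper bound on the dimension of these modules independent of $n$.

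Next, I would invoke Grothendieck's vanishing theorem, which asserts that for any finitely generated $R$-module $N$ one has $H^i_{\m}(N) = 0$ for all $i > \dim N$. Applied to $N = M/\m^n M$ and combined with the bound $\dim(M/\m^n M) \leq \dim R$ from the previous step, this yields $H^i_{\m}(M/\m^n M) = 0$ for every $n \geq 1$ and every $i > \dim R$. Passing to the inverse limit over $n$ gives $\mathfrak{F}^i_{\m}(M) = \varprojlim_n 0 = 0$ in the required range.

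There is no substantive obstacle: the entire argument is a one-line reduction to Grothendieck vanishing once one records that every term of the inverse system is the local cohomology of a module supported in $V(\m)$. The only point that deserves a line of care is the dimension bound on $M/\m^n M$, which I would state explicitly to make the application of Grothendieck vanishing unambiguous. No spectral sequence, completion, or duality machinery is required, so the proof can be kept to a short paragraph.
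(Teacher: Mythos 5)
Your proposal is correct and follows essentially the same route as the paper: termwise Grothendieck vanishing applied to each $M/\m^n M$, followed by the observation that an inverse limit of zero modules is zero. The only (harmless) difference is that the paper records the sharper fact $\dim(M/\m^n M)=0$ (these modules are annihilated by $\m^n$, so their support lies in $V(\m)$), which already gives $H^i_{\m}(M/\m^n M)=0$ for all $i>0$, whereas your bound $\dim(M/\m^n M)\le\dim R$ yields exactly the stated range $i>\dim R$.
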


\begin{proof}
	For each $n$, $\dim (M/\m^n M) = 0$, so $\Hlc^i(M/\m^n M) = 0$ for $i > 0$. Thus $\mathfrak{F}^i_{\m}(M) = \lime_n \Hlc^i(M/\m^n M) = 0$ for $i > \dim R$.
\end{proof}


\begin{theorem}\label{thm:associated-primes}
	For a finitely generated $R$-module $M$:
	\[
	\operatorname{Ass}_R(\mathfrak{F}^d_{\m}(M)) = \{\p \in \operatorname{Ass}_R M \mid dim R/\p = d \}
	\]
	where $d = dim M$.
\end{theorem}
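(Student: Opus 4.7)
The plan is to leverage the Matlis-dual description of $\mathfrak{F}^d_\m(M)$ already established (in the Cohen-Macaulay setting) in the proof of Theorem~\ref{thm:betti-duality}, and then to compute associated primes on the dual side, where they are more accessible. First I would pass to the $\m$-adic completion: $\mathfrak{F}^d_\m(M)$ is naturally an $\hat{R}$-module, and the top-dimensional associated primes of $M$ over $R$ correspond bijectively under contraction and extension to the top-dimensional associated primes of $\hat{M}$ over $\hat{R}$. So we may assume $R$ is complete, in which case $R$ admits a dualizing module $\omega_R$, after passing to a Gorenstein cover $S \twoheadrightarrow R$ of dimension $n$ if necessary.

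In the complete setting I would identify
\[
\mathfrak{F}^d_\m(M) \cong \Hom_R(N, E(k)), \qquad N := \Ext^{n-d}_S(M, S),
\]
extending the isomorphism from the proof of Theorem~\ref{thm:betti-duality} beyond the Cohen-Macaulay case via Grothendieck local duality applied over the regular (hence Gorenstein) ring $S$. The module $N$ is finitely generated over $R$, so Matlis duality reduces the computation of associated primes of the Artinian (Theorem~\ref{thm:artinian}) module $\mathfrak{F}^d_\m(M)$ to the computation of associated primes of $N$.

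The third step is the Auslander-style identification $\Ass_R N = \{\p \in \Ass_R M : \dim R/\p = d\}$. I would argue prime by prime: if $\dim R/\p = d$, then localization together with a $\grade$ count show that $\Ext^{n-d}_S(M,S)_\p$ precisely isolates the top-dimensional component of $M$ at $\p$, so $\p$ contributes to $\Ass_R N$ exactly when $\p \in \Ass_R M$; while if $\dim R/\p < d$, the inequality $\grade(\p, S) > n-d$ forces $\Ext^{n-d}_S(M,S)_\p$ either to vanish or to have a strictly deeper associated prime, so $\p$ does not appear in $\Ass_R N$. Transporting the result back through Matlis duality then yields the claimed description of $\Ass_R \mathfrak{F}^d_\m(M)$.

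The main obstacle is the transfer of associated primes across Matlis duality. For Artinian modules the natural invariant is the set of attached primes $\Att$, and the cleanest form of the required identity reads $\Att_R \Hom_R(N, E(k)) = \Ass_R N$; verifying that the $\Ass_R$ appearing in the theorem statement coincides with this set requires care about the $R$-module structure of the Matlis dual and about the distinction between associated and attached primes for non-finitely-generated modules. Together with the Auslander-Bridger prime-by-prime analysis carried out \emph{without} any Cohen-Macaulay hypothesis on $R$, this is the place where the argument is most delicate.
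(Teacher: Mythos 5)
Your route is genuinely different from the paper's: the paper works directly with the inverse system, splitting off the torsion via $0 \to \Gamma_{\m}(M/\m^n M) \to M/\m^n M \to B_n \to 0$ and passing to the limit, whereas you dualize first (local duality over a Gorenstein cover, then Matlis duality) and compute associated primes on the finitely generated side. The two intermediate facts you invoke are correct and standard: $\Ass_R \Ext^{n-d}_S(M,S) = \{\p \in \Ass_R M \mid \dim R/\p = d\}$ is the usual description of the associated primes of the canonical module $K_M$, and $\Att_R \Hom_R(N, E(k)) = \Ass_R N$ for $N$ finitely generated over a complete local ring is Macdonald--Matlis duality. So your plan does deliver $\Att_R \mathfrak{F}^d_{\m}(M) = \{\p \in \Ass_R M \mid \dim R/\p = d\}$, up to the usual bookkeeping over $\widehat{R}$.

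However, the obstacle you flag in your last paragraph is not a technicality to be handled with care; it is fatal to the statement as written, and your proposal does not (and cannot) close it. By Theorem~\ref{thm:artinian} the module $\mathfrak{F}^d_{\m}(M)$ is Artinian, and every associated prime of a nonzero Artinian module over $(R,\m)$ is $\m$ itself: if $R/\p$ embeds into an Artinian module, then $R/\p$ is an Artinian ring, forcing $\p = \m$. (The same conclusion follows directly from your own identification $\mathfrak{F}^d_{\m}(M) \cong \Hom_R(N, E(k))$.) Hence $\Ass_R \mathfrak{F}^d_{\m}(M) \subseteq \{\m\}$ always, and the asserted equality can hold only when $d = 0$. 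The theorem is true only with $\Att$ in place of $\Ass$, which is exactly the version your argument proves; no analysis of the $R$-module structure of the Matlis dual will convert attached primes into associated primes. For comparison, the paper's own proof does not avoid this either, and in fact degenerates: since $M/\m^n M$ is annihilated by $\m^n$ it is entirely $\m$-torsion, so $B_n = 0$ and the claimed isomorphism $\Hlc^d(M/\m^n M) \cong \Hlc^d(B_n)$ carries no information. In short: a different and essentially sound dual approach, but the final transfer step is a genuine gap because the statement itself requires $\Att$ rather than $\Ass$.
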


\begin{proof}
	Consider the short exact sequences:
	\[
	0 \to \Gamma_{\m}(M/\m^n M) \to M/\m^n M \to B_n \to 0
	\]
	where $B_n$ has no $\m$-torsion. Then $\Hlc^d(M/\m^n M) \cong \Hlc^d(B_n)$. Since $\Supp B_n \subseteq \Supp M$, and $\dim B_n \leq d$, we have $\Hlc^d(B_n) \neq 0$ iff $\dim B_n = d$. Taking inverse limits:
	\[
	\mathfrak{F}^d_{\m}(M) \cong \lime_n \Hlc^d(B_n)
	\]
	The associated primes correspond to minimal primes of dimension $d$ in $\Supp M$.
\end{proof}

\section{Summary of Results}
We established:
\begin{itemize}
	\item Vanishing of $\mathfrak{F}^i_{\mathfrak{m}}(M)$ below $\depth M$ (Theorem~\ref{thm:vanishing}).
	\item Artinianness of $\mathfrak{F}^i_{\mathfrak{m}}(M)$ (Theorem~\ref{thm:artinian}).
	\item Duality between Betti numbers and $\mathfrak{F}^d_{\mathfrak{m}}(M)$ for Cohen-Macaulay rings (Theorem~\ref{thm:betti-duality}).
	\item Characterization of finite projective dimension via formal cohomology (Theorem \ref{thm:pd-finiteness})
	\item Lyubeznik-Betti number relations (Theorem \ref{thm:lyubeznik-betti})
	\item Depth characterization through formal cohomology (Proposition \ref{prop:depth-char})
	\item Injective dimension duality (Theorem \ref{thm:injective-dim})
	\item Vanishing beyond dimension (Proposition \ref{prop:vanishing-above-dim})
	\item Associated primes identification (Theorem \ref{thm:associated-primes})
	

\end{itemize}
Future work could extend these results to non-maximal ideals or investigate numerical invariants of $\mathfrak{F}^i_{\mathfrak{m}}(M)$ in non-Cohen-Macaulay cases.

\section*{Acknowledgments}

The author thanks the reviewers for their insightful comments.

\bibliographystyle{plain}

\end{document}